\renewcommand{\epsilon}{\varepsilon}
\newcommand{\kahler}{K\"ahler }
\newcommand{\image}{image }
\newcommand{\PP}{{\mathbb P}}
\newcommand{\C}{{\mathbb C}}
\newcommand{\Z}{{\mathbb Z}}
\newcommand{\CP}{\C\PP}
\newcommand{\dbar}{\bar\partial}
\newcommand{\ddbar}{\partial\dbar}
\newcommand{\D}{{\mathbf D}}
\renewcommand{\phi}{\varphi}
\newcommand{\ccal}{\mathcal{C}}
\newcommand{\hcal}{\mathcal{H}}
\newcommand{\mcal}{\mathcal{M}}
\newcommand{\ocal}{\mathcal{O}}
\newtheorem{theo}{{Theorem}}[section]
\newtheorem{lem}[theo]{{Lemma}}
\newtheorem{defin}[theo]{{\sc Definition}}
\title[stable degeneration of curves]{Projective embedding of stably degenerating sequence of hyperbolic Riemann surfaces }
\author{Jingzhou Sun }
\address{Department of Mathematics, Shantou University, Shantou City, Guangdong
	515063,China} \email{jzsun@stu.edu.cn}
\thanks{The author is partially supported by NNSF of China no.11701353 and the STU Scientific Research Foundation for Talents no.130/760181.}
\date{\today}
\begin{document}

	\begin{abstract}
		Given a sequence of genus $g\geq 2$ curves converging to a punctured Riemann surface with complete metric of constant Gaussian curvature $-1$.
		we prove that the Kodaira embedding using orthonormal basis of the Bergman space of sections of a pluri-canonical bundle also converges to the embedding of the  limit space together with extra complex projective lines.
	\end{abstract}

	\maketitle
	
	\tableofcontents
	\section{Introduction}
	Let $\mcal_g$ be the moduli of smooth compact Riemann surfaces of genus $g$. When $g\geq 2$, the Deligne-Mumford compactification \cite{DeligneMumford}  $\overline{\mcal_g}$ is the moduli of stable curves.  Each smooth curve of genus $g$ carries an unique Poincar\'{e} metric with constant Gaussian curvature $-1$. If $C\in \overline{\mcal_g}$ is a singular stable curve, then by removing the nodes, the smooth part carries an unique complete hyperbolic metric with constant Gaussian curvature $-1$.
	 And if a holomorphic family $\pi: \ccal \to \D $ of compact smooth curves $C_t$ degenerate to $C=C_0$, then the hyperbolic metrics is continuous on the vertical line bundle  \cite{WolpertHyperbolic}.
	 
	 \
	 
	 In this article, from the point view of the quantization framework by Donaldson \cite{donaldson2001, Donaldson2014Gromov}, we are interested in the convergence of the pluri-canonical Bergman embeddings of the family of hyperbolic surfaces in the complex projective spaces. 
	More precisely, let $(C_j,g_j)$ be a sequence of genus $g\geq 2$ Riemann surfaces with Riemannian metric $g_j$ of constant Gaussian curvature $-1$, that converges, in the topology of pointed Gromov-Hausdorff, to a Punctured Riemann surface $(C_0,g_0)$(not necessarily connected) with a complete Riemannian metric $g_0$ of constant Gaussian curvature $-1$. Let $K_{C_j}$ denote the canonical bundle of $C_j$, then $K_{C_j}$ is endowed with a Hermitian metric $h_j$ defined by the \kahler form $\omega_j$ associated to $g_j$. We consider the Bergman space $\hcal_{j,k}$ consisting of $L^2$-integrable holomorphic sections of $K_{C_j}^k$. Then $\hcal_{j,k}$ is a finite-dimensional Hermitian space with the Hermitian product defined by
	$$<s,t>=\int_{C_j}(s,t)_{h_j}\omega_j, $$
	where, by abuse of notation, we still use $h_j$ to denote the induced Hermitian metric on $K_{C_j}^k$. For $k$ large enough, a basis of $\hcal_{j,k}$ will induce a Kodaira embedding of $C_j$ to $\CP^{N_k}, $ where $N_k=\dim \hcal_{j,k}-1$ is independent of $j\geq 1$. For $j=0$, the dimension of $\hcal_{j,k}$ is smaller than that of $j>0$. It is natural to consider the embedding induced by an orthonormal basis for $\hcal_{j,k}$, which can be considered as a bridge from \kahler geometry to complex geometry \cite{DonaldsonSun2, SunSun}. It is worth mentioning that after this article is finished, the author learned that Dong\cite{RobertDong} recently proved that if a smooth family of hyperelliptic curves degenerate to a nodal curve, then their Bergman kernels also converges to the Bergman kernel of the nodal curve.

	As the Gaussian curvature is $-1$, the degeneration of metrics can only be "pinching a nontrivial loop", namely a sequence of surfaces with growingly thiner and longer handles, with the central loops degenerating to points. So $C_0$ has $d$ pairs of punctures, which will be called ends. And for $k$ large enough,the dimension of $\hcal_{0,k}$ equals $N_k+1-d$. Now we can state our main theorem.
	\begin{theo}
		For $k$ large enough, we can choose an orthonormal basis for $\hcal_{j,k}$ for all $j>0$, so that as $j\to \infty$ the image of the embedding
		$$\Phi_{j,k}:C_j\to \CP^{N_k}$$
		 induced by the orthonormal basis converges to the image of $C_0$ under the embedding
		 $$\Phi_{0,k}:C_0\to \CP^{N_k-d}\subset \CP^{N_k}, $$ attached with $d$ pairs of linear $\CP^1$'s. To each pair of the ends $(p_\alpha,p_{\alpha+d})$, a pair of linear $\CP^1$'s are associated, and form a connected chain connecting the images of these two points.
	\end{theo}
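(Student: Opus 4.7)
The plan is to split the analysis into the thick part of $C_j$, where Tian--Zelditch--Catlin Bergman-kernel asymptotics give uniform convergence, and the $d$ long pinching collars, where one has to analyze a model on a standardized hyperbolic cylinder. The eventual orthonormal basis of $\hcal_{j,k}$ will consist of two groups: $N_k+1-d$ sections converging to an orthonormal basis of $\hcal_{0,k}$, plus $d$ ``peak sections'', one per pinching node, $L^2$-concentrated at the midpoints of the collars. Concretely, first I would lift an orthonormal basis $\{\sigma_\ell^{(0)}\}_{\ell=1}^{N_k+1-d}$ of $\hcal_{0,k}$ to $\hcal_{j,k}$ by the standard cutoff/$\dbar$-correction method: pick a cutoff $\chi_j$ equal to $1$ on the thick part of $C_j$ and supported away from the central geodesic of each collar. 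The $L^2$-mass of $\dbar(\chi_j\sigma_\ell^{(0)})$ is concentrated in the collars and shrinks with the length of the pinched geodesics, and because $K_{C_j}^k$ has strictly positive curvature for $k\ge 2$, H\"ormander's $L^2$-estimate for $\dbar$ gives a small correction $u_\ell^{(j)}$ such that $\sigma_\ell^{(j)}:=\chi_j\sigma_\ell^{(0)}-u_\ell^{(j)}$ is holomorphic; Gram--Schmidt produces orthonormal sections of $\hcal_{j,k}$ converging on compacts of the thick part to $\{\sigma_\ell^{(0)}\}$.

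Second I would construct the $d$ peak sections $\tau_\alpha^{(j)}$. Each pinching collar is biholomorphic to an annulus $\{e^{-L_\alpha/2}<|z|<e^{L_\alpha/2}\}$ with $L_\alpha=L_\alpha(j)\to\infty$, carrying its hyperbolic metric, and the Laurent monomials $z^n(dz/z)^k$ are orthogonal under the hyperbolic $L^2$-product. Among the $d$ directions in $H^0(\bar C_0,\omega_{\bar C_0}^k)$ absent from $\hcal_{0,k}$, exactly one mode per node corresponds to the section with pole of order exactly $k$ on both branches, which in the annulus coordinate is the mode $n=0$, i.e.\ $(dz/z)^k$. Cutting this model off, $\dbar$-correcting it on $C_j$ as before, and orthonormalizing against the $\sigma_\ell^{(j)}$ yields a peak section $\tau_\alpha^{(j)}\in\hcal_{j,k}$ whose $L^2$-mass concentrates near the center of collar $\alpha$.

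Third I would analyze the image $\Phi_{j,k}(C_j)$. On compacts in the thick part, $\tau_\alpha^{(j)}\to 0$ and $\sigma_\ell^{(j)}\to\sigma_\ell^{(0)}$ uniformly, so $\Phi_{j,k}$ converges there to $\Phi_{0,k}(C_0)\subset\CP^{N_k-d}\subset\CP^{N_k}$. On each collar, splitting at the central geodesic, the half closer to $p_\alpha$ is asymptotically controlled by the leading Laurent mode of those $\sigma_\ell^{(0)}$'s nonvanishing at $p_\alpha$ together with $\tau_\alpha^{(j)}$; their ratio reduces to a single monomial in $z$ and traces a projective line through $\Phi_{0,k}(p_\alpha)$ and a midneck point $q_\alpha\in\CP^{N_k}$ picked out by the $\tau_\alpha$-coordinate. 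Symmetrically, the other half traces a second line from $q_\alpha$ to $\Phi_{0,k}(p_{\alpha+d})$, producing the claimed connected chain of two $\CP^1$'s; the linearity of each line is a direct consequence of the monomial character of the dominant ratio.

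The hard part will be making this last step \emph{pointwise} on the neck rather than only in $L^2$. One needs sharp off-diagonal Bergman-kernel estimates on the model infinite hyperbolic cylinder, uniform as $L_\alpha\to\infty$, together with a careful matching between the model and the honest collar inside $C_j$, and quantitative bounds on all Laurent coefficients of the $\sigma_\ell^{(j)}$ and $\tau_\alpha^{(j)}$ along the entire neck. In particular one must verify that all Laurent modes other than the finitely many dominant ones contribute only subleading perturbations \emph{uniformly in homogeneous coordinates} along the whole collar, so that the limit is genuinely the union of two linear $\CP^1$'s rather than a more general singular degeneration; this uniform projective estimate, rather than the basis construction itself, is the technical heart of the argument.
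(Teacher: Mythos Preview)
Your proposal is correct and follows essentially the same route as the paper: build an almost-orthonormal basis of $\hcal_{j,k}$ from (i) sections converging to a basis of $\hcal_{0,k}$ via cutoff plus H\"ormander $\dbar$-correction, and (ii) $d$ extra ``peak'' sections from the $a=0$ Laurent mode $(dz/z)^k$ on each collar; then show that on each half-collar the image is dominated by the ratio of the peak section to the $a=\pm 1$ mode, producing a monomial map onto a linear $\CP^1$, and finally Gram--Schmidt.

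The only organizational difference is that the paper does not lift the entire orthonormal basis of $\hcal_{0,k}$ at once. Instead it first singles out, on both $C_0$ and $C_j$, a large block $V_0$, $V_j$ of sections built directly from the cusp/collar monomials $z^a e^{k+1}$ for $1\le |a|<k^{3/4}$ (so these sections are born on the local model rather than lifted from $C_0$), and only the orthogonal complement $W_0=V_0^\perp$ is transported to $C_j$ by the cutoff/$\dbar$ procedure you describe. This buys the paper very explicit control of the dominant sections on the collar (their norms are computed from the model integrals $I_{\epsilon,a}$), which is exactly what makes the ``hard part'' you flag---the uniform projective dominance of the $a=0$ and $a=\pm 1$ modes along the whole neck---a direct calculation (Lemma~\ref{lem-cut-tail} and the paragraph with the estimate $[1,e^{-\pi/2\epsilon}(\epsilon k)^{-k-1/2}z,0,\ldots]$). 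Your version would have to extract the same dominance from the Laurent expansion of the lifted $\sigma_\ell^{(j)}$, which is doable but slightly less transparent.
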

It is interesting to mention that during the process of taking limit, the pair of linear $\CP^1$'s are developed as a pair of bubbles.
Also, we should mention that $k$ depends only on the geometry of $C_0$, and does not need to be too big by the results in \cite{SunPunctured}. 

The proof of this theorem makes heavy use of the methods we developed from \cite{SunSun} to \cite{SunPunctured}. And just as in  \cite{Donaldson2014Gromov}, the main point is basically proving the convergence of the Bergman kernels. And we hope this result may shine a light on the study of the degeneration of higher dimensional projective manifolds \cite{SunZhang,JianSong2017, Annalen2019}.

\

	The structure of this article is as follows. We will first quickly recall the necessary background for this article. Then we will calculate in the model for the thin handles, or "the collar", of the Riemann surfaces close to the limit. And in the end, we will finish the proof of the convergence of the pluri-canonical Bergman embeddings.
	
	\
	
	\textbf{Acknowledgements.} The author would like to thank Professor Song Sun for many very helpful discussions.

	\

 \section{Punctured Model}

The model $\D^*$ with the Poincar\'{e} metric
$$\omega_{P}=\frac{2idz\wedge d\bar{z}}{|z|^2(\log |z|^2)^2}$$
Take the local section of the canonical bundle
$e=\frac{dz}{z}$, the local potential is 
$$\phi_P=-\log |e|^2=-\log \frac{(\log \frac{1}{|z|^2})^2}{2}$$
We use the notation $\tau=-\log |z|$, so $\phi_P=-\log(2\tau^2)$.
We are interested in the $L^2$-norm of the sections $z^ae^{k+1}$ of $K_{\D^*}^{k+1}$, $a\in \Z^+$. So we have the following integrals 
$$Y_a=\int (2\tau^2)^{k+1}|z|^{2a}\omega_P$$
We have 
$$Y_a=2^{k+2}\pi\int_{0}^{\infty} e^{-2a\tau+2k\log \tau}d\tau$$
We denote by $g_a(\tau)=-2a\tau+2k\log \tau$, then $g_a''(t)=-\frac{2k}{\tau^2}$. So $g_a(\tau)$ is a concave function which attains its only maximum at $\tau_a=\frac{k}{a}$. We will use the following basic lemma from \cite{SunPunctured}.
\begin{lem}\label{lemconcave}
	Let $f(x)$ be a concave function. Suppose $f'(x_0)<0$, then we have
	$$\int_{x_0}^\infty e^{f(x)}dx\leq\frac{e^{f(x_0)}}{-f'(x_0)}$$
\end{lem}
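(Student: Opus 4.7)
The plan is to exploit concavity by bounding $f$ above by its supporting (tangent) line at $x_0$, which is affine and therefore yields an exponential integral that can be evaluated in closed form.

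More concretely, I would first recall that for a concave function $f$, the graph lies below every tangent line, so for all $x \geq x_0$ we have the pointwise inequality
$$f(x) \leq f(x_0) + f'(x_0)(x - x_0).$$
(If $f$ is only concave and not necessarily differentiable everywhere, the same bound holds with $f'(x_0)$ replaced by any supporting slope; since the hypothesis assumes $f'(x_0)$ exists and is negative, I will just use the tangent line directly.) Exponentiating and integrating from $x_0$ to $\infty$ gives
$$\int_{x_0}^\infty e^{f(x)}\,dx \;\leq\; e^{f(x_0)} \int_{x_0}^\infty e^{f'(x_0)(x-x_0)}\,dx.$$
Because $f'(x_0) < 0$, the remaining integral converges and equals $1/(-f'(x_0))$, which yields the claimed bound.

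There is essentially no obstacle here: the only thing worth being careful about is the justification of the tangent-line inequality at the endpoint $x_0$, which follows from the standard fact that for a concave function the difference quotient $(f(x) - f(x_0))/(x - x_0)$ is nonincreasing in $x$, and therefore is bounded above by the right derivative at $x_0$ (which coincides with $f'(x_0)$ under the assumption of differentiability). Once that is in place, the argument is a one-line estimate, and no further analytical input is needed.
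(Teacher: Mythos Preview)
Your argument is correct and is the standard tangent-line bound for concave functions. The paper does not actually prove this lemma but cites it from \cite{SunPunctured}, so there is no in-paper proof to compare against; your approach is exactly the natural one.
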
	 We can use Laplace's method and the lemma above to estimate
$$Y_a\approx 2^{k+2}\pi e^{-2k+2k\log \frac{k}{a}}\sqrt{\frac{k\pi}{2a^2}}$$
Of course, we can directly calculate the integral to get
$$Y_a=2^{k+2}\pi\frac{(2k)!}{(2a)^{2k+1}},$$
But the idea of mass concentration is key to our arguments. 
The Bergman kernel of $\D^*$ is then 
$$\rho_{0,k+1}= \frac{2^{2k}\tau^{2k+2}}{\pi (2k)!}\sum (a)^{2k+1}|z|^{2a}$$
Let $C_0$ be a punctured Riemann surface obtained by removing $2d$ points $\{p_\alpha\}_{1\leq \alpha\leq 2d}$ from a compact Riemann surface. $C_0$ is endowed with a complete Poincar\'{e} metric $\omega$ with constant Gaussian curvature $-1$. $\omega$ defines a Hermitian metric $h$ on the canonical bundle $K_{C_0}$. Then, for any positive integer $k$, we denote by $\hcal_k$ the space of holomorphic sections of $K_{C_0}^k$ that are $L^2$-integrable, namely
$$\int_{C_0}|s|_h^2\omega<\infty. $$ 
For each $p_\alpha$, there is a neighborhood $U_\alpha$ with local coordinate $z$ so that $\omega=\omega_P$ on $U_\alpha\backslash p_\alpha$. We can assume that $U_\alpha$ contains the points satisfying $|z|\leq R_\alpha$. We note that the injective radius at the points $|z|= R_\alpha$ is about $\frac{\pi}{4(\log R_\alpha)^2}$. For simplicity, we let $R$ be the minimum of the $R_\alpha$'s, $1\leq \alpha\leq 2d$. 
Clearly, for the complement of $\cup_{1\leq \alpha\leq 2d}U_\alpha$ in $C_0$, there is a positive lower bound $\lambda_0$ for the injective radius. 
The basic conclusion of \cite{SunPunctured} is that for $k$ large enough, in the "inside" of $U_\alpha$ where $\tau=-\log |z|>\sqrt{k+1}$, the Bergman kernel $\rho_{k+1}$ is very much like $\rho_{0,k+1}$, which is dominated by the terms $c_a|z|^{2a}$ for $a<k^{3/4}$. In particular, when $\tau\geq k$, $\rho_{0,k+1}$ is dominated by $c_1|z|^2$. The sections for $C_0$ corresponding to $z^a$ in the model $\D^*$ is constructed as follows.
 We let $z_\alpha$ denote the local coordinate $z$ on $U_\alpha$. Let $e_\alpha=\frac{dz_\alpha}{\alpha}$ be the local frame of $K_{C_0}$. Then $z_\alpha^ae_\alpha^{k+1}$, $a\geq 1$, are local sections of $K_{C_0}^{k+1}$. We choose and fix a cut-off function $\chi(r)$ that equals $1$ for $r<R/2$ and that equals 0 for $r>2R/3$. Then we denote by $\chi_\alpha$ the function $\chi(|z_\alpha|)$ defined on $C_0$. Then $\chi_\alpha z_\alpha^ae_\alpha^{k+1}$ is a global smooth $L^2$-integrable section of $K_{C_0}^{k+1}$. We then take the orthogonal projection of this section into the space $\hcal_{k+1}$, and then normalize the holomorphic section to be of norm 1, obtaining a section $s_{\alpha,a}\in \hcal_{k+1}$. We denote by 
$$V_0=\{s_{\alpha,a},\quad 1\leq \alpha\leq 2d,\quad 1\leq a<k^{3/4} \}$$
For $k$ large enough, within $r<R/4$, the sections $s_{\alpha,a}\approx \sqrt{c_a}z^a$ with relative error less than $\frac{1}{k^2}$.

We choose and fix an orthonormal basis $W_0=\{s_j\}$ for the orthogonal complement $V_0^\perp\subset \hcal_{k+1}$. 

\

To obtain global sections of $L^k$ from local ones, we will need to use H\"ormander's $L^2$ estimate. The following lemma is well-known, see for example \cite{Tian1990On}. 

\begin{lem}\label{lemHorm}
	Suppose $(M,g)$ is a complete \kahler manifold of complex dimension $n$, $\mathcal L$ is a line bundle on $M$ with hermitian metric $h$. If 
	$$\langle-2\pi i \Theta_h+Ric(g),v\wedge \bar{v}\rangle_g\geq C|v|^2_g$$
	for any tangent vector $v$ of type $(1,0)$ at any point of $M$, where $C>0$ is a constant and $\Theta_h$ is the curvature form of $h$. Then for any smooth $\mathcal L$-valued $(0,1)$-form $\alpha$ on $M$ with $\bar{\partial}\alpha=0$ and $\int_M|\alpha|^2dV_g$ finite, there exists a smooth $\mathcal L$-valued function $\beta$ on $M$ such that $\bar{\partial}\beta=\alpha$ and $$\int_M |\beta|^2dV_g\leq \frac{1}{C}\int|\alpha|^2dV_g$$
	where $dV_g$ is the volume form of $g$ and the norms are induced by $h$ and $g$.
\end{lem}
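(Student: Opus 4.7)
The plan is to realize $\dbar$ as a closed densely-defined operator in $L^2$, derive a coercive a priori estimate from the curvature hypothesis via the Bochner-Kodaira-Nakano identity, and then extract $\beta$ by a duality/Riesz-representation argument. Throughout, completeness of $(M,g)$ enters in an essential way: using the $1$-Lipschitz distance function, one builds cutoff functions $\chi_R$ supported in the $R$-ball with $|d\chi_R|_g\to 0$, which ensures that smooth compactly supported forms are a core both for $T:=\dbar$ viewed as an operator $L^2(M,\mathcal L)\to L^2_{(0,1)}(M,\mathcal L)$ and for $S:=\dbar:L^2_{(0,1)}\to L^2_{(0,2)}$. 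Consequently, the Hilbert-space adjoint $T^*$ agrees with the formal adjoint computed by integration by parts.

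Next, applied to a compactly supported smooth $\mathcal L$-valued $(0,1)$-form $u$, the Bochner-Kodaira-Nakano identity yields
$$\|T^*u\|^2 + \|Su\|^2 \geq \int_M\langle(-2\pi i\,\Theta_h+\mathrm{Ric}(g))\cdot u,\bar u\rangle_g\,dV_g,$$
and the curvature hypothesis converts the right-hand side into $C\|u\|^2$. The cutoff argument above extends this to all $u\in\mathrm{Dom}(T^*)\cap\mathrm{Dom}(S)$; restricting to $u\in\ker S\cap\mathrm{Dom}(T^*)$ then gives the clean coercive estimate $\|T^*u\|^2\geq C\|u\|^2$.

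Now given $\dbar$-closed $\alpha\in L^2_{(0,1)}(M,\mathcal L)$ (so $\alpha\in\ker S$), I would try to define a linear functional $L$ on $\mathrm{Im}(T^*)$ by $L(T^*u):=\langle u,\alpha\rangle$. For any $u\in\mathrm{Dom}(T^*)$, decompose $u=u_1+u_2$ orthogonally in $L^2_{(0,1)}$ with $u_1\in\ker S$ and $u_2\in(\ker S)^\perp$; since $\mathrm{Im}(T)\subset\ker S$ (from $\dbar^2=0$), we have $u_2\in\mathrm{Im}(T)^\perp=\ker T^*$, hence $T^*u=T^*u_1$, while $\alpha\in\ker S$ gives $\langle u,\alpha\rangle=\langle u_1,\alpha\rangle$. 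The coercive estimate applied to $u_1\in\ker S\cap\mathrm{Dom}(T^*)$ then bounds
$$|\langle u,\alpha\rangle|=|\langle u_1,\alpha\rangle|\leq\|u_1\|\,\|\alpha\|\leq C^{-1/2}\|T^*u\|\,\|\alpha\|,$$
simultaneously demonstrating that $L$ is well-defined (if $T^*u=T^*u'$ then the estimate forces $\langle u-u',\alpha\rangle=0$) and bounded by $C^{-1/2}\|\alpha\|$. The Riesz representation theorem produces $\beta\in\overline{\mathrm{Im}(T^*)}$ with $\langle T^*u,\beta\rangle=\langle u,\alpha\rangle$ for all $u\in\mathrm{Dom}(T^*)$, i.e.\ $T\beta=\alpha$, and $\|\beta\|^2\leq C^{-1}\|\alpha\|^2$; interior elliptic regularity for $\dbar$ upgrades $\beta$ to a smooth section.

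The main obstacle in this outline is the cutoff-density argument used to extend the Bochner-Kodaira-Nakano estimate from compactly supported forms to general elements of $\mathrm{Dom}(T^*)\cap\mathrm{Dom}(S)$ in the graph norm. This is exactly where completeness of $g$ is truly used: without it, one cannot arrange $|d\chi_R|_g\to 0$ while retaining exhaustion, smooth compactly supported forms may fail to be a core for $T^*$, and the conclusion of the lemma can fail on general noncompact K\"ahler manifolds.
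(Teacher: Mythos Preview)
Your argument is the standard and correct proof of H\"ormander's $L^2$ estimate: the Bochner--Kodaira--Nakano identity gives the coercive inequality, completeness of $g$ ensures compactly supported forms are a core for $T^*$ and $S$, and the Riesz representation/duality trick produces the minimal solution with the desired bound. There are no gaps.

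As for comparison with the paper: the paper does not prove this lemma at all. It simply states it as ``well-known'' and cites Tian (\cite{Tian1990On}) for a reference. So your proposal supplies strictly more than the paper does here, and what you have written is exactly the classical argument one finds in H\"ormander, Demailly, or the cited reference.
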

In the setting of this article, for a curve $C_j$, $j\geq 0$, with line bundle $K_{C_j}^{k+1}$, the constant is $k$, independent of $j$. 
\section{the Collar model}

The model $\C^*_{\epsilon}=(\C^*,\omega_{\epsilon})$, where 
$$\omega_{\epsilon}=\frac{f_{\epsilon}idz\wedge d\bar{z}}{2|z|^2},$$
where $f_\epsilon$ is a function depending only on $|z|$, satisfying the following conditions
\begin{itemize}
	\item[$\bullet$]  $f_\epsilon>0$;
	\item[$\bullet$]  $f_\epsilon(1)=\epsilon^2$;
	\item[$\bullet$]  $f'_\epsilon(1)=0$;
	\item[$\bullet$]  $\Delta \log f_\epsilon=\frac{2f_\epsilon}{|z|^2}$;
\end{itemize}
Clearly, such $f_\epsilon$ exists and is unique. Also, our choice of $f_\epsilon$ makes the metric have constant Gaussian Curvature $-1$. We denote by $t=\log |z|$, then by abuse of the notation, we consider $f_\epsilon$ as a function of $t$. Then we have 
$$\Delta_z \log f_\epsilon=\frac{d^2 \log f_\epsilon}{dt^2}\frac{1}{|z|^2}$$
For simplicity, we will use $f(t)$ to denote $f_\epsilon$.
Therefore, we have
$$(\log f(t))''=2f(t)$$
The first fundamental form of the metric is 
$$I=f(t)dt^2+f(t)d\theta^2$$
Clearly, by the requirements on $f$, the circle $\{z||z|=1\}$ is a geodesic.
Then we use the arc-length parameter $u$ for the $t$-curves. Then by the curvature condition, we have 
$$f(t)=\epsilon^2\cosh^2 u, \quad u(0)=0$$
and $\frac{dt}{du}=\frac{1}{\epsilon\cosh u}$. Therefore, we have 
$$t=\frac{2}{\epsilon}\tan^{-1}[\tanh\frac{u}{2}]$$
So we have
$$\frac{\epsilon t}{2}=\tan^{-1}\sqrt{1-\frac{2}{\cosh u+1}}$$
Therefore when $\cosh u$ is large, we have the following estimation
\begin{equation}\label{formula1}
t=\frac{\pi}{2\epsilon}-\frac{1}{\epsilon(\cosh u+1)}+O(\frac{\epsilon}{(\epsilon(\cosh u+1))^2})
\end{equation}
So  it is natural to use the notations $$\tau_\epsilon=\frac{\pi}{2\epsilon}-t.$$ 
We also use the frame $e=\frac{dz}{z}$ for the canonical bundle, so we have
$$|e|^2=\frac{2}{f}$$

 \

We are interested in the $L^2$-norm of the sections $z^ae^{k+1}$ of $K_{\C^*}^{k+1}$, $a\in \Z$. So we have the following integrals 
$$I_{\epsilon,a}=\int (\frac{2}{f})^{k+1}|z|^{2a}\omega_{\epsilon}$$
We have 
$$I_{\epsilon,a}=2^{k+2}\pi\int_{-\infty}^{\infty} e^{2at-k\log f}dt$$
We denote by $g_a(t)=2at-k\log f$, then $g_a''(t)=-2kf(t)$. So $g_a(t)$ is a concave function which attains its only maximum at $t_a$. Write $u_a=u(t_a)$, we have 
$$\epsilon\sinh u_a=\frac{a}{k}$$
We will assume that $\epsilon$ is very small compared to $k^{-k}$. So $\sinh u_a=\frac{a}{k\epsilon}$ is very large. So $f(t_a)>\frac{a^2}{k^2}$, and we can use Laplace's method and lemma \ref{lemconcave} to estimate

$$I_{\epsilon,a}\approx 2^{k+2}\pi e^{2at_a-k\log f(t_a)}\sqrt{\frac{\pi}{2kf(t_a)}}$$
And we have that the mass of $I_{\epsilon,a}$ is concentrated within the neiborhood $\{|t-t_a|<\frac{\sqrt{k}\log k}{a}\}$ with relative error less than $k^{-\log k+3/2}$. Also, when $\epsilon$ is small,
$$\frac{I_{\epsilon,a+1}}{I_{\epsilon,a}}\approx e^{\pi/\epsilon}\frac{a^{2k+1}}{(a+1)^{2k+1}}$$
Therefore, the power series $\sum_{a>0}\frac{|z|^{2a}}{I_{\epsilon,a}}$ is very close to a multiple of the power series
$$\sum_{a>0}a^{2k+1}e^{-2a\tau_\epsilon}.$$
 Recall that the power series in the expression of $\rho_{0,k+1}$ is also
$$\sum a^{2k+1}|z|^{2a}=\sum a^{2k+1}e^{-2a\tau}$$
So by the same argument in \cite{SunPunctured}, for $t\in [0,t_1]$ the Bergman kernel is dominated by $[\frac{|z|^2}{I_{\epsilon,1}}+\frac{1}{I_{\epsilon,0}}](\frac{2}{f})^{k+1}$, and, by symmetry, for $t\in [t_{-1},0]$ the Bergman kernel is dominated by $[\frac{|z|^{-2}}{I_{\epsilon,-1}}+\frac{1}{I_{\epsilon,0}}](\frac{2}{f})^{k+1}$. In particular, we have the following
\begin{lem}\label{lem-cut-tail}
	For any holomorphic section $s$ of $K^{k+1}_{\C^*}$, satisfying $\parallel s\parallel=1$, we have
	$$|s|^2<\epsilon(\frac{\log \epsilon}{k})^{2k}$$
	when $\cosh u\in (\frac{-1}{2\epsilon\log \epsilon},\frac{-1}{\epsilon\log \epsilon})$. 
\end{lem}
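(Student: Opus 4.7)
The plan is to reduce the statement to a pointwise bound on the on-diagonal Bergman kernel. For any unit-norm holomorphic section $s$ of $K^{k+1}_{\C^*}$, Cauchy--Schwarz against an orthonormal basis gives $|s|^2(p)\le \rho_{k+1}(p)$, so it suffices to show $\rho_{k+1}<\epsilon(\log\epsilon/k)^{2k}$ on the prescribed annulus. The paragraph preceding the lemma already reduces $\rho_{k+1}$ on $t\in[0,t_1]$ to the two-term bound
$$\rho_{k+1}\ \lesssim\ \left(\frac{|z|^2}{I_{\epsilon,1}}+\frac{1}{I_{\epsilon,0}}\right)\left(\frac{2}{f}\right)^{k+1},$$
and, symmetrically, on $t\in[t_{-1},0]$ to the analogous bound with $I_{\epsilon,-1}$ in place of $I_{\epsilon,1}$; the argument below handles one half, the other following via $z\mapsto 1/z$.

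First I would verify that the annulus lies strictly between $u_0=0$ and $u_1$. Parametrizing $\cosh u = 1/(-c\epsilon\log\epsilon)$ with $c\in(1,2)$, the standing hypothesis $\epsilon\ll k^{-k}$ gives $|\log\epsilon|\gg k$, so $\cosh u\ll 1/(k\epsilon)=\sinh u_1$, confirming that the two-term bound applies. I would then pin down the size of each ingredient at such a point. Direct integration via $dt=du/(\epsilon\cosh u)$ and the beta-integral $\int_{\R}\cosh^{-2k-1}u\,du\sim\sqrt{\pi/k}$ yields $I_{\epsilon,0}\sim 2^{k+2}\pi^{3/2}\epsilon^{-2k-1}/\sqrt{k}$, while the Laplace-method formula already stated in the text, with $t_1\sim \pi/(2\epsilon)$ and $f(t_1)\sim k^{-2}$, gives $I_{\epsilon,1}\sim 2^{k+2}\pi\,e^{\pi/\epsilon}k^{2k}\sqrt{\pi k/2}$. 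On the annulus, formula~(\ref{formula1}) gives $|z|^2=e^{2t}\sim e^{\pi/\epsilon}\epsilon^{2c}$ and $(2/f)^{k+1}\sim 2^{k+1}c^{2k+2}(\log\epsilon)^{2k+2}$.

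Substituting, the $I_{\epsilon,1}$-summand is of order $\epsilon^{2c}(\log\epsilon)^2(\log\epsilon/k)^{2k}/\sqrt{k}$ — the factor $e^{\pi/\epsilon}$ cancels exactly between $|z|^2$ and $I_{\epsilon,1}$ — and for $c\ge 1$ this is comfortably dominated by $\epsilon(\log\epsilon/k)^{2k}$. The $I_{\epsilon,0}$-summand is of order $\epsilon^{2k+1}(\log\epsilon)^{2k+2}\sqrt{k}$, which under $\epsilon\ll k^{-k}$ is smaller than the target by a factor $\sim (k\epsilon)^{2k}$. Adding the two gives the claimed bound on each half of the annulus.

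The principal difficulty is precisely this $e^{\pi/\epsilon}$ cancellation between $|z|^{\pm 2}$ and $I_{\epsilon,\pm 1}$: without retaining the sub-leading correction $-1/(\epsilon\cosh u)$ in formula~(\ref{formula1}) the bound would only be $O(1)$, whereas the second-order term contributes exactly the surviving factor $\epsilon^{2c}$ that drives the estimate to zero. The only ancillary care needed is that the Laplace remainder in $I_{\epsilon,\pm 1}$ (controlled by Lemma~\ref{lemconcave}) and the higher-order remainder in (\ref{formula1}) are uniformly negligible on the annulus, which is automatic since $|\log\epsilon|\gg k$.
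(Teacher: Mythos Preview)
Your proposal is correct and follows essentially the same approach as the paper: reduce to the two-term Bergman-kernel bound $\big(\tfrac{|z|^2}{I_{\epsilon,1}}+\tfrac{1}{I_{\epsilon,0}}\big)(2/f)^{k+1}$ stated just before the lemma, then plug in the Laplace asymptotics for $I_{\epsilon,0}$, $I_{\epsilon,1}$ and the values of $|z|^2$ and $f$ on the annulus (the paper evaluates at the endpoint $\cosh u=-1/(\epsilon\log\epsilon)$ and argues the other endpoint is smaller, while you keep $c\in(1,2)$ throughout). One cosmetic point: your displayed order for each summand silently drops the factor $c^{2k+2}$ coming from $(2/f)^{k+1}$; this is harmless since $c^{2k+2}\epsilon^{2c}$ is decreasing on $[1,2]$ when $|\log\epsilon|\gg k$, but it would be cleaner to say so.
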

\begin{proof}
	By symmetry, we can assume $t>0$. For the right end of the interval, we only need to estimate the norms of $\frac{z}{I_{\epsilon,1}}e^{k+1}$ and  $\frac{1}{I_{\epsilon,0}}e^{k+1}$ at $t$ where $\cosh u=\frac{-1}{\epsilon\log \epsilon}$. For the first one, we have
	$$|\frac{z}{I_{\epsilon,1}}e^{k+1}|^2\approx \frac{\sqrt{2k}\epsilon}{4k\pi^{3/2}}(\frac{\log \epsilon}{k})^{2k}$$
	For the second one, we have 
	$$|\frac{1}{I_{\epsilon,0}}e^{k+1}|^2\approx \frac{\sqrt{2k}}{4\pi^{3/2}}\epsilon^{2k+1}(\log \epsilon)^{2k},$$
	which is much smaller than the first one. For the left end of the interval, we have smaller norm for the section $\frac{z}{I_{\epsilon,1}}e^{k+1}$, and still very small norm for the section $\frac{1}{I_{\epsilon,0}}e^{k+1}$. Combining these estimates, we have proved the lemma.
\end{proof}

Assume $C_j$ converges to $C_0$ in the pointed Gromov-Hausdorff topology. For $j$ big enough, $C_j$ has exactly $d$ closed geodesics whose arc length is less than $\lambda_0/4$. We denote these circles by $\gamma_{j,\alpha}$, $1\leq \alpha\leq d$, and arc length of $\gamma_{j,\alpha}$ by $\epsilon_{j,\alpha}$. By rearranging the points $p_{\alpha}$, we can assume that $2\pi \epsilon_{j,\alpha}$ converges to the pair $(p_\alpha,p_{\alpha+d})$ as $j\to \infty$. Also for $j$ large enough, there a neighborhood $U_{j,\alpha}$, usually referred to as a collar , of each $\gamma_{j,\alpha}$ which is homeomorphic to an annulus. We define a map 
$$h_{j,\alpha}:U_{j,\alpha}\to \C^*_\epsilon$$
with $\epsilon=\epsilon_{j,\alpha}$, as following. Fix an isometry $\lambda$ of $\gamma_{j,\alpha}$ to the circle $|z|=1$ in $\C^*_\epsilon$. Then passing through each point $q$ on $\gamma_{j,\alpha}$, there is an unique geodesic $l_q$ orthogonal to $\gamma_{j,\alpha}$. Then we define $h_{j,\alpha}$ to be the map that sends each such geodesic $l_q$ to the geodesic passing through $\lambda(q)$ and being orthogonal to the unit circle, preserving $\lambda$ and the orientation. Since both surfaces have constant Gaussian curvature $-1$, $h_{j,\alpha}$ is an isometry so long as the geodesics $l_q$ do not intersect each other. But since the curvature is negative, by the Gauss-Bonnet theorem, these geodesics can not intersect within $U_{j,\alpha}$. Therefore, $h_{j,\alpha}$ is also holomorphic. so we can use the coordinate $z$ from $\C^*_\epsilon$ as the holomorphic coordinate of $U_{j,\alpha}$. By switching $p_\alpha$ and $p_{\alpha+d}$ if necessary, we can assume that the part $|z|>1$ of $U_{j,\alpha}$ converges to a neighborhood of $p_\alpha$ and the part $|z|<1$ to that of $p_{\alpha+d}$. We can assume that $U_{j,\alpha}=\{1/M\leq |z|\leq M\}$ and for $j$ large enough, we can assume that the injective radius at $|z|=M$ is larger than $\frac{\pi}{4(\log 3R/4)^2}$. We denote by $U_{j,\alpha}^+$ the part of $U_{j,\alpha}$ with $|z|>1$, similarly $U_{j,\alpha}^-$ the part with $|z|<1$. We then define a map 
$$\phi_{j,\alpha}:U_{j,\alpha}^+\to U_\alpha$$
by sending $\epsilon\cosh u$ to $\frac{1}{2\tau}$ while preserving the circles $\{u=\textit{constant}\}$. Clearly, we are only preserving the length of the circles. By symmetry, we also have
$$\phi_{j,\alpha+d}:U_{j,\alpha}^-\to U_{\alpha+d}.$$
By our assumption on the injective radius, the image of $\phi_{j,\alpha}$ contains the circle $|z_\alpha|=\frac{3R}{4}$.
On $U_\alpha$, the first fundamental form is
$$I_0=\frac{1}{\tau^2}(d\tau^2+d\theta^2)$$
So the pull back
$$\phi_{j,\alpha}^*I_0=\tanh^2u du^2+(\epsilon\cosh u)^2d\theta^2$$
is almost isometric to the metric
$$I_j=du^2+(\epsilon\cosh u)^2d\theta^2,$$ when $u$ is large. In particular, for the part where $\epsilon\cosh u\geq \frac{-1}{\log \epsilon}$, $\phi_{j,\alpha}$ converges to an isometry when $j\to \infty$.

\

Let $U_\alpha(r)$ denote the subset of $U_\alpha$ consists of the points $|z_\alpha|<r$. Let $F=C_0\backslash \cup_{1\leq \alpha\leq 2d}U_\alpha(\frac{2R}{3})$, and let $\psi_j:F\to C_j$ be the diffeomorphism with its image. Since $\psi_j$ converges to an isometry as $j\to \infty$, we can glue $\psi_j^{-1}$ with the $\phi_{j,\alpha}$'s, by rotating $\phi_{j,\alpha}$ if necessary, for $j$ large enough, to get a map
$$G_j:C_j\backslash \cup \gamma_{j,\alpha}\to C_0,$$
with the following properties:
\begin{itemize}
		\item[$\bullet$] $G_j$ is a diffeomorphism of $C_j\backslash \cup \gamma_{j,\alpha}$ with its image.
	\item[$\bullet$] $G_j=\phi_{j,\alpha}$ for $p\in \phi_{j,\alpha}^{-1}U_\alpha(\frac{2R}{3})$, $1\leq \alpha\leq 2d$. 
		\item[$\bullet$] $G_j$ is almost an isometry on $C_j\backslash\cup_{1\leq \alpha\leq 2d} \phi_{j,\alpha}^{-1}U_\alpha(\frac{2R}{3})$, and converges to an isometry when $j\to \infty$.
\end{itemize} 

For any conformal metric, the compatible complex structure $J$ is just a counterclockwise rotation by $\frac{\pi}{2}$. We see that almost isometry implies almost holomorphic. Therefore $G_j^{-1*}K_{C_j}$ converges to $K_{C_0}$ as subbundles of $T_{C_0}\otimes \C$. More precisely, let $J_j$ be the complex structure compatible with the Riemannian metric $g_j$,  if the point-wise norm 
$$\sup_{v\in T_p,|v|_g=1}|g_j(v,v)-g(v,v)|<\delta,$$
then we have
$$\sup_{v\in T_p,|v|_g=1}|J_j(v)-J(v)|_g<\lambda\delta$$
for some constant $\lambda$ independent of $p$ and $g$. We call the supremum above the pointwise distance from $J_j$ to $J$. Moreover, if $g_j$ converges to $g$ in $C^2$-norm, then $J_j$ converges to $J$ in $C^2$-norm also. If we denote by $T_J$ the holomorphic tangent space with respect to $J$, then the orthogonal projection of $T_{J_j}$ to $T_J$ is close to an isometry if $J_j$ is close to $J$. We identify $T_{J_j}$ with $T_j$ via this orthogonal projection, similarly $K_j=T_{J_j}^*$ with $K=T_J^*$, which we will also call an orthogonal projection, for simplicity. Since the metric on the canonical bundle is defined by the \kahler form $\omega$, and $\omega_j$ converges to $\omega$, we have that the Chern connection $\nabla_j$ on $K_j$ converges to the Chern connection $\nabla$ on $K$. 
\
\section{convergence of projective embedding}

By assigning value 1 on $\gamma_{j,\alpha}$, we glue together the pull back $G_j^*\chi_\alpha$ and $G_j^*\chi_{\alpha+d}$ to get a function denoted by $\tilde{\chi}_\alpha$, for $1\leq \alpha\leq d$. 
On each $\phi_{j,\alpha}$, we also consider $\tilde{\chi}_\alpha z^a$ as global smooth sections of $K^{k+1}_{C_j}$. Then we repeat the construction of $V_0$ by normalizing the orthogonal projection of 
$\tilde{\chi}_\alpha z^a$ onto $\hcal_{j,k+1}$, and denote the result section by $s_{j,\alpha,a}$, $|a|<k^{3/4}$. Then we denote by
$$V_j=\{s_{j,\alpha,a},\quad 1\leq \alpha\leq d,\quad |a|<k^{3/4} \}$$
We should remark here that the choice of the upper bound $k^{3/4}$ is not necessary, it is purely a habit from \cite{SunSun}.
Notice that the number of sections of $V_j$ is larger than that of $V_0$ by the number $d$. Those extra sections are $\{s_{j,\alpha,0}\}_{1\leq \alpha\leq d}$. We consider $s_{j,\alpha,a}$ as a smooth section of $G_j^{-1*}K_{C_j}^{k+1}$ on $\image(G_j)$. We then define a piecewise smooth section $\tilde{s}_{j,\alpha,a}$ of $K_{C_0}^{k+1}$ which equals the orthogonal projection of  $s_{j,\alpha,a}$ to $K_{C_0}^{k+1}$ on $\image(G_j)$, and equals 0 in the complement. For simplicity, we will say that $s_{j,\alpha,a}$ converges in some topology if $\tilde{s}_{j,\alpha,a}$ converges in that topology.

	\begin{lem}
		$s_{j,\alpha,a}$ converges to $s_{\alpha,a}$ for $a>0$, to $s_{\alpha+d,-a}$ for $a<0$, in $L^2$-norm, as $j\to \infty$.
	\end{lem}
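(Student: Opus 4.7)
I will treat the case $a>0$; the case $a<0$ is symmetric via the involution $z\mapsto 1/z$, which swaps the two ends of the collar and exchanges $\alpha$ with $\alpha+d$. My plan is to use the diffeomorphism $G_j$ (together with the bundle projection and zero-extension defined just above the statement) to transport the construction from $C_j$ onto $C_0$ and to show convergence to the corresponding construction on $C_0$. There are three substeps: (i) the smooth source section $\tilde\chi_\alpha z^a e^{k+1}$ on $C_j$, after transport to $C_0$ and normalization, converges in $L^2(C_0)$ to the normalized source $\chi_\alpha z_\alpha^a e_\alpha^{k+1}/\|\chi_\alpha z_\alpha^a e_\alpha^{k+1}\|$; (ii) the orthogonal projection onto $\hcal_{j,k+1}$ (resp.\ $\hcal_{k+1}$) is an $L^2$-exponentially small perturbation of the smooth source, by H\"ormander's estimate applied to the cutoff $\bar\partial$-error; (iii) passing from the projected sections to the normalized sections then yields convergence of $\tilde s_{j,\alpha,a}$ to $s_{\alpha,a}$.

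For (i), by construction $\tilde\chi_\alpha=G_j^*\chi_\alpha$ on $U_{j,\alpha}^+$, and Laplace's method applied to $I_{\epsilon,a}$ concentrates the mass of $\tilde\chi_\alpha z^a e^{k+1}$ near $u=u_a$ with $\sinh u_a=a/(k\epsilon)$. Under the identification $\tau_\epsilon\leftrightarrow\tau$ coming from formula \eqref{formula1}, this peak lies at $\tau_\epsilon\approx k/a$, exactly where $Y_a$ has its peak for the source on $C_0$. In this bounded $\tau$-region, which carries essentially all of the mass, $\phi_{j,\alpha}$ converges to an isometry (hence to a biholomorphism) as $j\to\infty$, with $G_j^{-1*}z\approx e^{\pi/(2\epsilon)}z_\alpha$ to leading order in $\epsilon$; so the transported normalized source differs from the normalized source on $C_0$ by an $L^2$-error tending to zero, and Lemma \ref{lem-cut-tail} ensures that the complementary region near $\gamma_{j,\alpha}$ contributes only $o(1)$. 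For (ii), I would apply Lemma \ref{lemHorm} with curvature constant $k$ to $\bar\partial(\tilde\chi_\alpha z^a e^{k+1})$, whose support lies in the annular transition region of the cutoff, at $t$-distance of order $k/a$ from $u_a$. Using Lemma \ref{lemconcave} together with $g_a''(t_a)\approx -2a^2/k$ yields
\[
\|\bar\partial(\tilde\chi_\alpha z^a e^{k+1})\|_{L^2}^2\leq e^{-ck}\|\tilde\chi_\alpha z^a e^{k+1}\|_{L^2}^2
\]
for some $c>0$, and the same estimate holds on $C_0$.

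The main obstacle is that $\phi_{j,\alpha}$ is not biholomorphic: the collar coordinate $z$ is not literally the pullback of $z_\alpha$, so the two smooth source sections agree only to leading order in $\epsilon$ within the region of concentrated mass. I would handle this by taking $(\tau_\epsilon,\theta)$ on the collar and $(\tau,\theta)$ on $U_\alpha$ as corresponding coordinates and controlling the residual non-holomorphic discrepancies through the $C^2$-convergence $J_j\to J$ noted at the end of the previous section; the resulting correction is lower order in $\epsilon$ and is absorbed into the existing $L^2$-errors. Once (i) and (ii) are in hand, step (iii) is routine: since both projections are $L^2$-close to their respective sources and the sources converge in $L^2$ after transport, and since an additional H\"ormander correction on $C_0$ turns the transported (almost-holomorphic) section into a genuinely holomorphic one with $L^2$-error $o(1)$, the normalized projections converge in $L^2$ as claimed.
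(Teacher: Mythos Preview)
Your steps (i) and (ii) are along the same lines as the paper, and your mass–concentration estimates are correct. The gap is in step (iii). Since $k$ is fixed throughout the lemma, your bound
\[
\|\bar\partial(\tilde\chi_\alpha z^a e^{k+1})\|_{L^2}^2\leq e^{-ck}\,\|\tilde\chi_\alpha z^a e^{k+1}\|_{L^2}^2
\]
produces a H\"ormander correction of size $O(e^{-ck})$ that is small but \emph{does not tend to zero as $j\to\infty$}. Writing $F_j$ and $F$ for the smooth sources on $C_j$ and $C_0$, your triangle inequality gives only
\[
\limsup_{j\to\infty}\bigl\|\tilde s_{j,\alpha,a}-s_{\alpha,a}\bigr\|\ \le\ \bigl\|P_j F_j/\|F_j\|-F_j/\|F_j\|\bigr\|+\bigl\|PF/\|F\|-F/\|F\|\bigr\|\ \le\ Ce^{-ck},
\]
which is not the claimed $L^2$-convergence. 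Your ``additional H\"ormander correction on $C_0$'' does not close this: it produces \emph{some} holomorphic section on $C_0$ within $o(1)$ of $\tilde s_{j,\alpha,a}$, but you have no mechanism to identify that section with $s_{\alpha,a}$ beyond the same $O(e^{-ck})$ proximity to the common source.

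What is missing is convergence of the corrections themselves. If $v_j$ and $v$ denote the minimal $L^2$ solutions of the relevant $\bar\partial$-equations on $C_j$ and $C_0$, you must show $v_j\to v$ after transport. The paper does this by a two-sided comparison exploiting minimality and uniqueness: transport $v$ to $C_j$ (cut off near the short geodesics) to obtain $\tilde v_j$, solve $\bar\partial_j u=\bar\partial_j\tilde v_j-\bar\partial_j v_j$ with $\|u\|^2\le\delta_j/k\to 0$, and use that $v_j$ is minimal to get $\|\tilde v_j\|^2\ge\|v_j\|^2-o(1)$; then run the same argument in the opposite direction to get $\|v_j\|^2\ge\|v\|^2-o(1)$. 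Combining, $\|v_j\|\to\|v\|$, and since $\tilde v_j-u$ and $v_j$ solve the same equation with $v_j$ orthogonal to holomorphic sections, the Pythagorean identity $\|\tilde v_j-u\|^2=\|v_j\|^2+\|\tilde v_j-u-v_j\|^2$ forces $\|\tilde v_j-v_j\|\to 0$. This is the step your outline skips; once it is in place, your (i) and (ii) finish the proof.
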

\begin{proof}
	By symmetry, we only have to prove for $a>0$.
	By taking $j$ large enough, we can assume that $p\in C_0\backslash \cup U_{\alpha}(\epsilon(j,\alpha)) $. Since when $\epsilon\cosh u\geq \frac{-1}{\log \epsilon}$, $\tanh^2u=1-(\epsilon\log \epsilon)^2$ is very close to 1. So $\phi_{j,\alpha}$ is very close to an isometry.  
	For simplicity, we still use $\epsilon$ for short for $\epsilon(j,\alpha)$.	
	Then we look at the integral
	\begin{eqnarray*}
		I_{j,\alpha,a}&=&2^{k+2}\pi\int \tilde{\chi}_\alpha^2 \frac{1}{(\epsilon\cosh u)^{2k}}e^{2at}dt\\
		&=&2^{k+2}\pi e^{\pi a/\epsilon}\int \tilde{\chi}_\alpha^2 \frac{1}{(\epsilon\cosh u)^{2k}}e^{-2a\tau_\epsilon}d\tau_\epsilon
	\end{eqnarray*} $$ $$

On $C_0$, we have
	$$J_{\alpha,a}=2^{k+2}\pi\int \chi_\alpha^2 \tau^{2k} e^{-2a\tau}d\tau $$
	For $I_{j,\alpha,a}$, by lemma \ref{lemconcave}, we can truncate the part $\tau_\epsilon>-\log \epsilon$ by introducing a relative error $<\epsilon$. Also for $J_{\alpha,a}$, we can truncate the part $\tau>-\log \epsilon$ by introducing a relative error $<\epsilon$.
	Then for the part $\tau_\epsilon\leq-\log \epsilon$, 
	$\tilde{\chi}_\alpha^2 \frac{1}{(\epsilon\cosh u)^{2k}}e^{-2a\tau_\epsilon}$ converges to $\chi_\alpha^2 \tau^{2k} e^{-2a\tau}$ uniformly. Therefore
	$I_{j,\alpha,a}e^{-\pi a/\epsilon}$ converges to $J_{\alpha,a}$ as $j\to \infty$. Therefore, $I_{j,\alpha,a}^{-1/2}z_\alpha^a$ converges to $J_{\alpha,a}^{-1/2}z^a$ as $j\to \infty$. Also for this part, the 1-form $\frac{dz_\alpha}{z_\alpha}$ converges uniformly to $\frac{dz}{z}$. The way to get orthogonal projection onto holomorphic sections is to find the solutions of 
	$$\dbar v=\dbar J_{\alpha,a}^{-1/2}z^a(\frac{dz}{z})^{k+1}$$
	and
		$$\dbar_j v_j=\dbar_j I_{j,\alpha,a}^{-1/2}z_\alpha^a(\frac{dz_\alpha}{z_\alpha})^{k+1}$$
	with minimal $L^2$-norms, where we denote by $\dbar_j$ the $\dbar$ operator on $C_j$. And in order to prove the conclusion of the lemma, it suffices to prove that $v_j$ converges to $v$. Notice that $\dbar v$ is supported within the annulus $\frac{2R}{3}\leq |z|\leq \frac{3R}{4}\subset U_\alpha$. And by the mass concentration property of $z^a$, the mass $$\parallel \dbar v\parallel<\frac{1}{k^2}.$$
	Therefore 
	$$\int |v|^2\omega<\frac{1}{k^3},$$
	and $v$ is holomorphic outside the support of $\dbar v$. Since the Bergman kernel is dominated by $\sqrt{Y_1^{-1}}z$, and 
		$$\int_{|z|<\epsilon} Y_1^{-1}|z|^2\omega<\epsilon,$$ 
	 we have 
	$$\int_{|z|<\epsilon} |v|^2\omega<\epsilon$$
	in every $U_\alpha$. We pull back the restriction $v$ to $C_0\backslash \cup U_{\alpha}(\epsilon(j,\alpha)) $ by $G_j$, and use cut-off functions $\kappa_\epsilon$ near the edges to get a global smooth section of  $G_j^*(K_{C_0}^{k+1})$, which is then projected to a smooth section of $K_{C_j}^{k+1}$, called $\tilde{v}_j$. More explicitly, we define the $\kappa_\epsilon$ as a smooth function of $\tau_\epsilon$ that equals 1 for $\tau_\epsilon<-\log \epsilon$ and equals $0$ for $\tau_\epsilon>-2\log \epsilon $. We can also assume that $|\kappa_\epsilon'|<\frac{2}{-\log \epsilon}$.  We have 
	$$\nabla^*\nabla v=2\dbar^*\dbar v+(k+1)v$$
So $$\int |\nabla v|^2\omega=\int 2|\dbar v|^2\omega+(k+1)\int |v|^2\omega <\frac{3}{k^2}. $$
Therefore, $$\int |\nabla_j \tilde{v}_j|^2\omega_j<\frac{4}{k^2}$$
$$\int_{C_j}|\dbar_j \tilde{v}_j-\dbar_jv_j|^2\omega_j=\delta_j,$$
where $\delta_j\to 0$ as $j\to \infty$. Then we can solve the equation $$\dbar_j u=\dbar_j \tilde{v}_j-\dbar_jv_j$$ 
with minimal $L^2$-norm, so that
$$\int_{C_j}|u|^2\omega_j\leq \frac{1}{k}\delta_j$$
Since $v_j$ is a minimal solution, $$\int |\tilde{v}_j-u|^2\omega_j\geq \int |v_j|^2\omega_j.$$
So  $$\int |\tilde{v}_j|^2\omega_j\geq \int |v_j|^2\omega_j-\sqrt{\frac{\delta_j}{k^5}}.$$
Conversely, for each $v_j$, we first use the cut off functions $\kappa_\epsilon$ to make it vanish near the edges, then we pull it back by $G_j^{-1}$ to $C_0$. By orthogonal projection and extension by 0, we obtain a smooth section $\tilde{v}^j$ of $K_{C_0}^{k+1}$. Similar to $\tilde{v}_j$, by lemma \ref{lem-cut-tail}, we have  
$$\int_{C_0}|\dbar \tilde{v}^j-\dbar v|^2\omega_j=\delta_j',$$
where $\delta_j'\to 0$ as $j\to \infty$. Then by solve a $\dbar$-equation again, we get that
$$\int |\tilde{v}^j|^2\omega\geq \int |v|^2\omega-\sqrt{\frac{\delta_j'}{k^5}}.$$
Since the $L^2$ norm of $\tilde{v}_j$ is close to that of $v$, and $$\int |\tilde{v}^j|^2\omega\leq \int |v_j|^2\omega_j+\delta_j'',$$ 
where $\delta_j'\to 0$ as $j\to \infty$, we get that 
$$\int |\tilde{v}_j|^2\omega_j- \int |v_j|^2\omega_j\to 0,$$
 as $j\to \infty$. Then by the uniqueness of the minimal solution of the $\dbar$-equation, we get that $$\int |\tilde{v}_j-v_j|^2\omega_j\to 0$$
 as $j\to \infty$. Finally, since the $L^2$-norm of $s_{j,\alpha,a}$ on the area where $\frac{1}{\epsilon \cosh u}<\frac{-1}{2\log \epsilon}$ also goes to 0 as $j\to \infty$, we have proved the theorem.
	
\end{proof}
	The same ideas can be used for the sections in $W_0$. For each $s\in W_0$, we have
		$$\int_{|z|<\epsilon} |s|^2\omega<\epsilon$$
	in every $U_\alpha$. We pull back the restriction $s$ to $C_0\backslash \cup U_{\alpha}(\epsilon(j,\alpha)) $ by $G_j$, and use cut-off functions $\kappa_\epsilon$ near the edges to get a global smooth section of  $G_j^*(K_{C_0}^{k+1})$, which is then projected to smooth section of $K_{C_j}^{k+1}$, called $\tilde{s}_j$. Then we have
	$$\int_{C_j}|\dbar_j \tilde{s}_j|^2\omega_j\to 0$$
	as $j\to \infty$. So we can solve the equation
	$$\dbar_j u_j=\dbar_j \tilde{s}_j$$
	with minimal $L^2$-norm to get a holomorphic section
	$s_{,j}=u_j+\tilde{s}_j\in \hcal_{j,k+1}$ satisfying
	$\int_{C_j}|s_{,j}|^2\omega_j-1\to 0$ and $s_{,j}\to s$ in $L^2$-norm, as $j\to \infty$. So we have proved the following:

\begin{lem}
	For each $s\in W_0$, we can find $s_{,j}\in \hcal_{j,k+1}$, so that $s_{,j}$ converges to $s$ in the $L^2$-norm, and $\parallel s_{,j}\parallel\to 1$ as $j\to \infty$.
\end{lem}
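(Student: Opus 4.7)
The plan is to adapt the strategy from the preceding lemma. The first step is to establish a decay estimate near each puncture: for every $s\in W_0$,
$$\int_{|z_\alpha|<\epsilon}|s|^2\omega<\epsilon$$
in each $U_\alpha$. This follows from the Bergman-kernel dominance of \cite{SunPunctured}: on $U_\alpha$ the Bergman kernel of $\hcal_{k+1}$ is controlled by the contributions of the model-like sections $\sqrt{c_a}\,z_\alpha^a(dz_\alpha/z_\alpha)^{k+1}$ with $1\leq a<k^{3/4}$, which span $V_0$ up to relative error $k^{-2}$. Since $s\perp V_0$, its mass on the small disks $\{|z_\alpha|<\epsilon\}$ must be $O(\epsilon)$.

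Next, I would transplant $s$ to $C_j$. Pull back $s$ by $G_j$ to $C_j\setminus\bigcup\gamma_{j,\alpha}$, multiply by the cut-off $\kappa_\epsilon$ (as defined in the preceding lemma) to kill the tails near the pinching loops, and then use the fiberwise orthogonal identification of $G_j^*K$ with $K_j$ on $\image(G_j)$ to obtain a globally defined smooth section $\tilde{s}_j$ of $K_{C_j}^{k+1}$. The crucial estimate is
$$\int_{C_j}|\dbar_j\tilde{s}_j|^2\omega_j\to 0\qquad(j\to\infty).$$
This has two sources. The first is the support of $d\kappa_\epsilon$, where $|\kappa_\epsilon'|\leq 2/(-\log\epsilon)$ and where the integrand is bounded via the puncture-decay estimate of the previous paragraph. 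The second is the region on which $G_j$ is only almost an isometry; here the difference between $\dbar_j$ and $\dbar$ is controlled by the pointwise distance between $J_j$ and $J$, which vanishes since $\omega_j\to\omega$ in $C^2$ on the complement of the collars, hence so do the Chern connections $\nabla_j\to\nabla$.

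Finally, H\"ormander's $L^2$-estimate (Lemma \ref{lemHorm}), valid with the uniform constant $k$ independent of $j$, yields a minimal solution $u_j$ of $\dbar_j u_j=\dbar_j\tilde{s}_j$ with $\|u_j\|_{L^2}^2\leq \frac{1}{k}\|\dbar_j\tilde{s}_j\|_{L^2}^2\to 0$. Setting $s_{,j}:=\tilde{s}_j-u_j\in\hcal_{j,k+1}$ produces a holomorphic section, and one checks directly that $\|s_{,j}\|_{L^2}\to 1$ and $s_{,j}\to s$ in $L^2$-norm under the identification provided by $G_j$ and the fiberwise orthogonal projection.

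The principal obstacle is the first step. Proving the uniform decay $\int_{|z_\alpha|<\epsilon}|s|^2\omega<\epsilon$ for arbitrary $s\in W_0$ requires upgrading the pointwise Bergman-kernel dominance of \cite{SunPunctured} into a statement about the orthogonal complement of $V_0$: one must know that the $V_0$-sections essentially exhaust the Bergman kernel near the punctures, so that orthogonality forces exponential smallness of $s$ on $|z_\alpha|<\epsilon$. Once this bound is secured, the $\dbar$-gluing argument is routine and parallels the preceding lemma almost verbatim, with only the bookkeeping of cut-off supports and the near-isometry of $G_j$ to track.
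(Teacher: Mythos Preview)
Your proposal is correct and follows essentially the same route as the paper: establish the decay bound $\int_{|z_\alpha|<\epsilon}|s|^2\omega<\epsilon$, transplant $s$ to $C_j$ via $G_j$ with the cut-off $\kappa_\epsilon$ and the fiberwise projection to obtain $\tilde{s}_j$, show $\|\dbar_j\tilde{s}_j\|_{L^2}\to 0$, and correct by the minimal H\"ormander solution to produce $s_{,j}\in\hcal_{j,k+1}$. Your write-up is in fact more explicit than the paper's, which simply asserts the decay bound and sketches the $\dbar$-gluing in a few lines; your identification of the decay estimate as the only nontrivial point, and your justification via Bergman-kernel dominance plus orthogonality to $V_0$, are exactly what underlies the paper's unelaborated claim.
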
	
	 We will denote by $W_j$ the set of sections $\{s_{,j}|s\in W_0\}$. Notice that $W_j$ is not an orthonormal set, but gets closer as $j$ gets larger. We fix an order on $W_0$, the order each $W_j$ accordingly. We give each $V_j$ the dictionary order. Recall that $s_{j,\alpha,a}$ corresponds to $s_{\alpha,a}$ for $a>0$, $s_{\alpha+d,-a}$ for $a<0$. Then we add to $V_0$ $d$ $0$'s, and order the obtained set $\tilde{V}_0$ according to the correspondence to $V_j$, where each $0$ corresponds to a section $s_{j,\alpha,0}$, $1\leq \alpha\leq d$. Then we define the embedding
	 $$\Phi_j:C_j\to \CP^{N_k},$$
	 where $N_k=\dim \hcal_{j,k+1}-1$, by $\Phi_j=[V_j,W_j]$, where $[\cdots]$ means the homogeneous coordinates. Similarly we define the embedding
 $$\Phi_0:C_0\to \CP^{N_k},$$ by $\Phi_0=[\tilde{V}_0,W_0]$.

Let $[Z_0,\cdots,Z_N]$ be the homogeneous coordinates of $\CP^N$. $U_0=\{[1,w],w\in \C^N\}$ is a coordinate patch with $w_i=\frac{Z_i}{Z_0}$. The $Z_i$'s can be identified as generating sections in 	$H^0(\CP^N,\ocal(1))$. In particular, $Z_0$ is a local frame in $U_0$. Then on $U_0$, the Fubini-Study form $\omega=\frac{i}{2}\ddbar\log (1+|w|^2)$ has the following explicit form $$\omega=\frac{i}{2}\frac{(1+|w|^2)\sum dw^i\wedge d\bar{w}_i-(\sum \bar{w}_idw_i)(\sum w_id\bar{w}_i)}{(1+|w|^2)^2}$$
On each $U_{j,\alpha}$, within the area $0\leq t\leq t_0=\frac{\pi}{2\epsilon}+\log \epsilon$, the image under $\Phi_j$ is dominated by the two sections $s_{j,\alpha,0}$ and $s_{j,\alpha,1}$, since the contribution of other sections is of relative size $<k^2\epsilon$. We can estimate the map $[s_{j,\alpha,0},s_{j,\alpha,1},0,\cdots]$
by the local sections $[b_0,b_1z,0,\cdots]$, with relative error $<\frac{1}{k^2}$, where 
$b_0^{-2}=\frac{2^{k+2}\pi^{3/2}}{\sqrt{2k}}\epsilon^{-2k-1}$
 and $b_1^{-2}=\frac{2^{k+2}\pi^{3/2}}{\sqrt{2k}}e^{\pi/\epsilon}k^{2k+1}$. So the map is simplified to 
 	$$[1,e^{-\frac{\pi}{2\epsilon}}(\epsilon k)^{-k-1/2}z,0,\cdots]$$
 	So we can estimate the length of the image of $\gamma_{j,\alpha}$, which is approximately
 	$2\pi e^{-\frac{\pi}{2\epsilon}}(\epsilon k)^{-k-1/2}$, and goes to $0$ as $j\to \infty$. So the image of $\gamma_{j,\alpha}$ converges to $[1,0,\cdots]$ in the current ordering of coordinates. Similarly, we can estimate the length of the image of the circle $t_0=\frac{\pi}{2\epsilon}+\log \epsilon$, which is approximately $\frac{2\pi}{\epsilon}(\epsilon k)^{k+1/2}$, which goes to 0 as $j\to \infty$. Therefore, the image of the circle  $t_0=\frac{\pi}{2\epsilon}+\log \epsilon$ converges to $[0,1,\cdots]$ in the current ordering of coordinates. Notice that $e^{-\frac{\pi}{2\epsilon}}(\epsilon k)^{-k-1/2}|z|$ goes to $\infty$ at $t_0$, so the image of the area $0\leq t\leq t_0=\frac{\pi}{2\epsilon}+\log \epsilon$ converges to the $\CP^1$ connecting the points $[1,0,0,\cdots]$ and $[0,1,0,\cdots]$ in the current ordering of coordinates. This area is the bubble mentioned in the introduction. By symmetry, the image of the area $0\geq t\geq -t_0 $ also converges to a $\CP^1$.  
 	
 	\
 	
 	For the part $G_j^{-1}(C_0\backslash \cup U_{\alpha}(\epsilon(j,\alpha))) $, the sections $\{s_{j,\alpha,0}\}_{1\leq \alpha d}$ are negligible. So the convergence of the remaining sections in $V_j$ and $W_j$ in the $L^2$ sense implies that the image of $G_j^{-1}(C_0\backslash \cup U_{\alpha}(\epsilon(j,\alpha))) $ under $\Phi_j$ converges to the image of $\Phi_0$. To conclude the proof of the main theorem, we only have to notice that although $V_j\cup W_j$ is not orthonormal, we can modify them.
 	The sections in $W_j$ are almost orthonormal, so we can transform them to be orthonormal with a matrix $A_j$ whose difference from the identity matrix goes to $0$ as $j\to \infty$. And the modified sections still converge to sections in $W_0$ in $L^2$. We first apply a Gram-Schmidt process to the set $V_0$, then we apply a Gram-Schmidt process following the order of $V_0$ to the set $V_j\backslash \{s_{j,\alpha,0}\}_{1\leq \alpha d}$. Finally, since the sections $\{s_{j,\alpha,0}\}_{1\leq \alpha d}$ are almost orthonormal and almost orthogonal to the other section in $V_j$, we can modify the new $V_j$ again with a matrix $B_j$ whose difference from the identity matrix goes to $0$ as $j\to \infty$, to get a orthonormal set. And we have proved the main theorem.

	\bibliographystyle{plain}

	\bibliography{references}
	
\end{document}